\date{27 August, 2016}
\documentclass[11pt]{amsart}
\usepackage[utf8]{inputenc}
\usepackage[T1]{fontenc}
\usepackage{amsmath}
\usepackage{amsfonts}
\usepackage{amssymb}
\usepackage{amsthm}
\usepackage[all,cmtip]{xy}
\usepackage{mathrsfs}
\usepackage{mathabx}
\usepackage{color}
\definecolor{cobalt}{RGB}{61,89,171}
\usepackage[colorlinks,citecolor=cobalt,linkcolor=cobalt,urlcolor=cobalt,pdfpagemode=UseNone,backref = page]{hyperref}

\def\bQ{\mathbb{Q}}
\def\bR{\mathbb{R}}
\newcommand{\R}{\mathbb{R}}

\newcommand{\cF}{\mathcal{F}}
\def\cH{\mathcal{H}}
\def\cL{\mathcal{L}}
\def\cM{\mathcal{M}}

\renewcommand{\a}{\alpha}

\newcommand{\ii}{\imath}

\theoremstyle{plain}
\newtheorem{theorem}{Theorem}[section]
\newtheorem{proposition}[theorem]{Proposition}
\newtheorem{lemma}[theorem]{Lemma}
\newtheorem{corollary}[theorem]{Corollary}

\theoremstyle{definition}
\newtheorem{definition}[theorem]{Definition}

\newtheorem{remark}[theorem]{Remark}

\newcommand{\thmref}[1]{Theorem~\ref{#1}}

\newcommand{\lemref}[1]{Lemma~\ref{#1}}
\newcommand{\corref}[1]{Corollary~\ref{#1}}

\begin{document}

\title{Parallel Forms, Co-K\"ahler Manifolds and their Models}

\author[G. Bazzoni]{Giovanni Bazzoni}
\address{Fachbereich Mathematik und Informatik\\
Philipps-Universit\"at Marburg\\
35032 Marburg, Germany}
\email{bazzoni@mathematik.uni-marburg.de}

\author[G. Lupton]{Gregory Lupton}
\address{Department of Mathematics\\
Cleveland State University\\
Cleveland OH \\
44115  USA}
\email{g.lupton@csuohio.edu}

\author[J. Oprea]{John Oprea}
\address{Department of Mathematics\\
Cleveland State University\\
Cleveland OH \\
44115  USA}
\email{j.oprea@csuohio.edu}

\date{\today}

\keywords{co-K{\"a}hler manifold, toral rank conjecture, parallel form}
\subjclass[2010]{55P62}

\begin{abstract} We show how certain topological properties of co-K{\"a}hler manifolds
derive from those of the K\"ahler manifolds which construct them. In particular, we show that the existence of parallel
forms on a co-K\"ahler manifold reduces the computation of cohomology from the
de Rham complex to certain amenable sub-cdga's defined by geometrically natural
operators derived from the co-K\"ahler structure. This provides a simpler proof of the formality of the foliation minimal model 
in this context.
\end{abstract}

\thanks{This work was partially supported by grants from the Simons Foundation (\#209575 to
Gregory Lupton and \#244393 to John Oprea) and an INdAM (Istituto Nazionale di Alta Matematica)
fellowship (to Giovanni Bazzoni).}

\maketitle

\section{Introduction}\label{sec:intro}
Co-K\"ahler manifolds may be thought of as odd-dimensional versions of K\"ahler manifolds and various structure theorems explicitly display how the former are constructed
from the latter (see \cite{BO,Li}).

In this paper, we take the point of view that topological and geometric properties of co-K\"ahler manifolds are inherited from those of the K\"ahler manifolds 
that construct them. We call this the \emph{hereditary principle} and we shall see this in both topological and geometric contexts. See \cite{BLO} for further  
applications of this principle. First, let us recall some basic definitions (see \cite{Bl} for a detailed introduction).

\begin{definition}\label{def:cosymp}
An \textbf{almost contact metric structure} $(J,\xi,\eta,g)$ on a manifold $M^{2n+1}$ consists of a tensor $J$ of type $(1,1)$, a vector field $\xi$, a 1-form $\eta$ 
and a Riemannian metric $g$ such 
that
\begin{equation}\label{eq:1}
J^2 = -I + \eta \otimes \xi,\quad \eta(\xi)=1, \quad g(JX,JY)=g(X,Y)-\eta(X)\eta(Y),
\end{equation}
for vector fields $X$ and $Y$, $I$ the identity transformation on $TM$.
\end{definition}

A local $J$-basis for $TM$, $\{X_1,\ldots,X_n,JX_1,\ldots,JX_n,\xi\}$,
may be found with $\eta(X_i)=0$ for $i=1,\ldots,n$. The \emph{fundamental
$2$-form} on $M$ is given by
\[\omega(X,Y) = g(JX,Y),\]
and if $\{\alpha_1,\ldots,\alpha_n,\beta_1,\ldots,\beta_n,\eta\}$ is
a local $1$-form basis dual to the local $J$-basis, then
\[\omega = \sum_{i=1}^n \alpha_i \wedge \beta_i.\]
Note that $\imath_\xi \omega = 0$.

\begin{definition}
The geometric structure $(M^{2n+1},J,\xi,\eta,g)$ is 
\begin{itemize}
\item \textbf{co-symplectic} if $d\omega=0=d\eta$;
\item \textbf{normal} if $[J,J]+2\, d\eta \otimes \xi=0$;
\item \textbf{co-K\"ahler} if it is co-symplectic and normal; equivalently, if $J$ is parallel with respect to the metric $g$.
\end{itemize}
%
%a
%{\bf co-K\"ahler} structure on $M$ if
%$$[J,J]+2\, d\eta \otimes \xi=0 \ \, {\rm and}\ \, d\omega=0=d\eta$$
%or, equivalently, $J$ is parallel with respect to the metric $g$.
\end{definition}

Recently, co-symplectic geometry has attracted a great deal of interest, especially in the context of Poisson geometry, where co-symplectic structures are interpreted as corank 1 Poisson structures 
(see for instance \cite{CdNY1,CoMa,FMM,GMP,MT}). Sasakian structures also belong to this family; more precisely, they are normal structures such that $d\eta=\omega$ (see \cite{BoGa,CdNY2,CdNMY}).

Two crucial facts about co-K\"ahler manifolds are contained in the following lemma.
For a direct proof of these facts, see \cite{BO}.

\begin{lemma}\label{lem:parallel}
On a co-K\"ahler manifold, the vector field $\xi$ is Killing and parallel.
Furthermore, the 1-form $\eta$ is parallel and harmonic.
\end{lemma}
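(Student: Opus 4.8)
The plan is to argue directly with the Levi--Civita connection $\nabla$ of $g$, exploiting the characterization of the co-K\"ahler condition as $\nabla J = 0$. Since $\nabla g = 0$ as well, the fundamental form $\omega(X,Y)=g(JX,Y)$ is then parallel too; I will also use the two algebraic facts packaged in \eqref{eq:1}, namely $\eta = g(\,\cdot\,,\xi)$ and $g(\xi,\xi)=1$. (These, along with $J\xi=0$ and $\eta\circ J=0$, follow from \eqref{eq:1} alone --- e.g. expanding $J^3$ in two ways forces $\eta(JY)\,\xi=\eta(Y)\,J\xi$, hence $J\xi=0$, and then setting $X=\xi$ in the compatibility relation gives the rest; see \cite{Bl}.) Finally I use the observation $\imath_\xi\omega=0$ recorded above.

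The heart of the matter is to show that $\xi$ is parallel. Differentiating $\imath_\xi\omega=0$ and using $\nabla\omega=0$ gives $\imath_{\nabla_X\xi}\,\omega = \nabla_X(\imath_\xi\omega)=0$ for every vector field $X$, so $\nabla_X\xi$ lies in the radical of $\omega$ at each point. Because $\omega=\sum_{i=1}^n\alpha_i\wedge\beta_i$ is nondegenerate on $\ker\eta$ while $\imath_\xi\omega=0$, that radical is precisely $\R\xi$; hence $\nabla_X\xi = c(X)\,\xi$ for some $1$-form $c$. Pairing this with $\xi$ and using $g(\xi,\xi)=1$,
\[
c(X) \;=\; g(\nabla_X\xi,\xi) \;=\; \tfrac12\, X\!\left(g(\xi,\xi)\right) \;=\; 0,
\]
so $\nabla\xi=0$. (Equivalently, one can differentiate $J^2=-I+\eta\otimes\xi$: since $\nabla J=0$ and $\nabla I=0$ this yields $(\nabla_X\eta)\otimes\xi+\eta\otimes\nabla_X\xi=0$, from which the same conclusion drops out after pairing the $Y=\xi$ component with $\xi$.)

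The remaining assertions are now formal. Since $\eta=g(\,\cdot\,,\xi)$ with both $g$ and $\xi$ parallel, $\nabla\eta=0$ as well. A parallel vector field is automatically Killing: $(\cL_\xi g)(X,Y)=g(\nabla_X\xi,Y)+g(X,\nabla_Y\xi)=0$. And a parallel $1$-form is harmonic --- $d\eta$ is the alternation of $\nabla\eta$ (so $d\eta=0$, as the co-symplectic hypothesis also demands) and $\delta\eta=-\operatorname{tr}_g\nabla\eta=0$, whence $\Delta\eta=(d\delta+\delta d)\eta=0$.

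I do not expect a genuine obstacle here once $\nabla J=0$ is in hand; the only points requiring care are remembering that $\eta=g(\,\cdot\,,\xi)$ and $g(\xi,\xi)=1$ are built into \eqref{eq:1} rather than being extra hypotheses, and correctly identifying the radical of $\omega$ with $\R\xi$. A less computational alternative would be to quote the local structure theorem presenting a co-K\"ahler manifold as a K\"ahler mapping torus (as in \cite{BO}), in which $\xi$ is visibly the parallel unit generator of the $\R$-factor; but the argument above is shorter and self-contained.
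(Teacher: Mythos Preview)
Your argument is correct. The paper itself does not prove this lemma; it only states it and directs the reader to \cite{BO} for a direct proof, so there is nothing in the present paper to compare your approach against. Your route---using $\nabla\omega=0$ together with $\imath_\xi\omega=0$ to force $\nabla_X\xi$ into the one-dimensional radical $\R\xi$ of $\omega$, and then killing the coefficient via $g(\xi,\xi)=1$---is clean and self-contained, and the remaining assertions (that $\eta=g(\,\cdot\,,\xi)$ is parallel, that $\xi$ is Killing, and that a parallel $1$-form is harmonic) are indeed formal consequences.
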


\noindent \lemref{lem:parallel} is a key point in \thmref{thm:cosympsplit}
below. In fact, in \cite{Li} it is shown that we can replace $\eta$ by a
harmonic integral form $\eta_\theta$ with dual parallel vector field
$\xi_\theta$ and associated metric $g_\theta$, $(1,1)$-tensor
$J_\theta$ and closed $2$-form $\omega_\theta$ with $i_{\xi_\theta}\omega_\theta
=0$. Then we have the following result of H. Li.

\begin{theorem}[\cite{Li}] \label{thm:maptor}
With the structure $(M^{2n+1},J_\theta,\xi_\theta,\eta_\theta,g_\theta)$,
there is a compact K\"ahler manifold $(K,h)$ and a Hermitian isometry
$\psi\colon K \to K$ such that $M$ is diffeomorphic to the mapping torus
\[K_\psi = \frac{K \times [0,1]}{(x,0)\sim (\psi(x),1)}\]
with associated fibre bundle $K \to M=K_\psi \to S^1$.
\end{theorem}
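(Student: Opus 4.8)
The plan is to extract the circle-bundle structure directly from the harmonic integral $1$-form $\eta_\theta$ and then to recognize the monodromy of the resulting bundle as the time-one flow of the dual vector field $\xi_\theta$. Since $\eta_\theta$ is closed with integral periods it represents a class in $H^1(M;\bZ)=[M,S^1]$, so after correcting by the differential of a globally defined function I would obtain a smooth map $f\colon M\to S^1=\bR/\bZ$ with $f^*(d\theta)=\eta_\theta$, where $d\theta$ denotes the standard angular form of total integral $1$. Because $\eta_\theta(\xi_\theta)=1$ the form $\eta_\theta$ vanishes nowhere, so $f$ is a submersion, and since $M$ is compact Ehresmann's theorem makes $f$ into a locally trivial fibre bundle $K\to M\xrightarrow{f}S^1$ with compact fibre $K:=f^{-1}([0])$ (one may pass to a connected component of $K$ at the end if one wants the fibre to be connected, $\psi$ then permuting the components cyclically).

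For the monodromy I would use the flow $\{\phi_t\}$ of $\xi_\theta$, which is complete by compactness and consists of isometries since $\xi_\theta$ is parallel, hence Killing (see \lemref{lem:parallel} and the discussion preceding the theorem). From $\tfrac{d}{dt}(f\circ\phi_t)=(f^*d\theta)(\xi_\theta)=\eta_\theta(\xi_\theta)=1$ one reads off that $f\circ\phi_t$ is translation by $t$ on $S^1$, so $\phi_1$ restricts to a diffeomorphism of $K$; writing $\psi$ for its inverse, the assignment $(x,t)\mapsto\phi_t(x)$ descends to a diffeomorphism $K_\psi\to M$ compatible with the two projections to $S^1$ (the flow lines of $\xi_\theta$ being transverse to the fibres), which is the required mapping-torus description.

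The substantive step is to put a Kähler structure on $K$ making $\psi$ a Hermitian isometry, and here I would simply restrict the ambient data: take $h:=g_\theta|_K$ and the restriction of $J_\theta$. Since $\eta_\theta=g_\theta(\xi_\theta,\cdot)$, the tangent bundle $TK$ is exactly $\ker\eta_\theta=\xi_\theta^{\perp}$, and the almost contact identities $\eta_\theta\circ J_\theta=0$ and $J_\theta^2=-I+\eta_\theta\otimes\xi_\theta$ show that $J_\theta$ restricts to an almost complex structure $J_K$ on $K$ with $(K,h,J_K)$ almost Hermitian and fundamental form $\omega_\theta|_K$. Normality together with $d\eta_\theta=0$ gives $[J_\theta,J_\theta]=-2\,d\eta_\theta\otimes\xi_\theta=0$, and evaluated on vector fields tangent to $K$ — which remain tangent, $K$ being a $J_\theta$-invariant submanifold — this is precisely the vanishing of the Nijenhuis tensor of $J_K$, so $J_K$ is integrable; since $d\omega_\theta=0$ forces $d(\omega_\theta|_K)=0$, the triple $(K,h,J_K)$ is Kähler. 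Finally $\xi_\theta$ and $J_\theta$ being parallel give $\mathcal{L}_{\xi_\theta}J_\theta=0$, so $\phi_1$, and hence $\psi$, preserves both $g_\theta$ and $J_\theta$, and therefore restricts to a Hermitian isometry of $(K,h,J_K)$.

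I expect this last step to be the only real obstacle, and the only place where the co-K\"ahler (rather than merely co-symplectic) hypothesis is genuinely needed: one must check carefully that $J_\theta$ preserves $TK$, that normality really restricts to integrability of $J_K$, and that the fundamental form of the induced Hermitian structure on $K$ is the restriction of $\omega_\theta$, so that $d\omega_\theta=0$ delivers the Kähler condition. The circle-bundle part, by contrast, is soft: any nowhere-zero closed integral $1$-form on a compact manifold yields such a bundle.
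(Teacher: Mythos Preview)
The paper does not supply its own proof of this theorem; it is quoted from Li \cite{Li} as background and then used as input for the refinement in \thmref{thm:cosympsplit}. There is therefore nothing in the paper to compare your argument against.

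That said, your sketch is correct and is essentially Li's argument (and, for the fibration part, Tischler's): the closed integral $1$-form $\eta_\theta$ produces a submersion $f\colon M\to S^1$ with $f^*(d\theta)=\eta_\theta$; the flow of the parallel (hence Killing, complete) vector field $\xi_\theta$ moves fibres to fibres and realises $M$ as the mapping torus of its time-one return map; and the restriction of $(J_\theta,g_\theta,\omega_\theta)$ to a fibre is K\"ahler, with the return map a Hermitian isometry because $\nabla\xi_\theta=0$ and $\nabla J_\theta=0$ force $L_{\xi_\theta}g_\theta=0$ and $L_{\xi_\theta}J_\theta=0$. The verifications you single out as the ``real obstacle'' are indeed routine once you invoke the standard almost-contact identities $J_\theta\xi_\theta=0$ and $\eta_\theta\circ J_\theta=0$, which guarantee that $TK=\ker\eta_\theta$ is $J_\theta$-invariant and that $J_\theta^2|_{TK}=-I$; normality with $d\eta_\theta=0$ then gives $N_{J_\theta}=0$ on $M$, and since $K$ is a $J_\theta$-invariant submanifold this restricts to $N_{J_K}=0$.
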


In \cite{BO}, the following refinement of Li's result is proved:

\begin{theorem}[\cite{BO}, Theorem 3.3]\label{thm:cosympsplit}
Let $(M^{2n+1},J,\xi,\eta,g)$ be a compact co-K\"ahler manifold with integral structure
and mapping torus bundle $K \to M \to S^1$. Then $M$ splits as
$M \cong S^1 \times_{\mathbb Z_m} K$,
where $S^1 \times K \to M$ is a finite cover with structure group
$\mathbb Z_m$ acting diagonally and by translations on the
first factor. Moreover, $M$ fibres over the circle $S^1/(\mathbb Z_m)$
with finite structure group.
\end{theorem}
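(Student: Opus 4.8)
The plan is to combine \thmref{thm:maptor} with a single structural fact about compact isometry groups. By \thmref{thm:maptor} we may write $M\cong K_\psi$ for a compact K\"ahler manifold $(K,h)$ and a Hermitian isometry $\psi\colon K\to K$; concretely one may take $\psi=\phi_1|_K$, where $\phi_t$ is the flow of the parallel Killing field $\xi$ of \lemref{lem:parallel} and $K$ is a fibre of the integral harmonic form $\eta$, but for what follows only the conclusion of \thmref{thm:maptor} is needed. The idea is then to replace $\psi$, without changing the diffeomorphism type of the mapping torus, by a Hermitian isometry $\psi_0$ of \emph{finite} order; the cyclic group $\langle\psi_0\rangle$ will be the group $\mathbb{Z}_m$ of the statement.

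To produce $\psi_0$, observe that $\mathrm{Isom}(K,h)$ is a compact Lie group (since $K$ is compact) and that the Hermitian isometries form a closed, hence compact Lie, subgroup $G$ containing $\psi$. Therefore $A:=\overline{\langle\psi\rangle}\subseteq G$ is a compact \emph{abelian} Lie group: its identity component $A_0$ is a torus, so $\exp\colon\mathfrak{a}_0\to A_0$ is surjective, and $A/A_0$ is finite and cyclic, generated by the class of $\psi$. Let $m=|A/A_0|$, so that $m$ is minimal with $\psi^m\in A_0$. Choose $v\in\mathfrak{a}_0$ with $\exp(v)=\psi^m$ and set
\[
\psi_0:=\psi\circ\exp\!\big(-\tfrac1m v\big)\in A .
\]
Because the two factors lie in the abelian group $A$ they commute, so $\psi_0$ is again a Hermitian isometry of $(K,h)$ and $\psi_0^{\,m}=\psi^m\exp(-v)=\mathrm{id}_K$; minimality of $m$ then forces $\psi_0$ to have order exactly $m$.

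Next I would write down an explicit diffeomorphism $K_\psi\cong K_{\psi_0}$. Realising $K_\psi=(K\times\R)/\mathbb{Z}$ with $n\cdot(x,s)=(\psi^n x,\,s+n)$, the shearing map $\Xi\colon K\times\R\to K\times\R$, $\Xi(x,s)=\big(\exp(-\tfrac sm v)x,\,s\big)$, is a diffeomorphism, and the identity $\exp(-\tfrac{s+n}m v)\,\psi^n=\psi_0^{\,n}\,\exp(-\tfrac sm v)$, immediate from commutativity, shows that $\Xi$ intertwines this $\mathbb{Z}$-action with the one defining $K_{\psi_0}$. Hence $\Xi$ descends to $M\cong K_\psi\cong K_{\psi_0}$; geometrically $\Xi$ merely reparametrises the flow of $\xi$, absorbing its ``irrational part'' $\exp(\tfrac1m v)$ into the time coordinate.

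Finally I would read off the two assertions from $M\cong K_{\psi_0}$ with $\psi_0^{\,m}=\mathrm{id}$. The subgroup $m\mathbb{Z}\le\mathbb{Z}$ acts on $K\times\R$ by $(x,s)\mapsto(x,s+m)$, so $S^1\times K=(\R/m\mathbb{Z})\times K=(K\times\R)/m\mathbb{Z}$ is a regular $m$-fold cover of $(K\times\R)/\mathbb{Z}=M$, with deck group $\mathbb{Z}_m$ acting by $[1]\cdot([s],x)=([s+1],\psi_0 x)$, that is, diagonally, by the order-$m$ translation on the $S^1$ factor and by the finite-order Hermitian isometry $\psi_0$ on $K$; one checks directly that this action is free. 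This is exactly $M\cong S^1\times_{\mathbb{Z}_m}K$ with $S^1\times K\to M$ a finite cover. Projecting $(K\times\R)/\mathbb{Z}=M$ onto $\R/\mathbb{Z}$ then exhibits $M$ as the mapping torus of $\psi_0$, i.e.\ a bundle $K\to M\to S^1$ with monodromy $\psi_0$ and hence structure group the finite cyclic group $\langle\psi_0\rangle$; under the covering this base circle is identified with $(\R/m\mathbb{Z})/\mathbb{Z}_m=S^1/\mathbb{Z}_m$, which is the last claim. The one genuine idea here is the passage to $A=\overline{\langle\psi\rangle}$ and the use of its torus identity component to trade the possibly infinite-order $\psi$ for the finite-order $\psi_0$; once that is in place, everything else is routine bookkeeping with mapping tori.
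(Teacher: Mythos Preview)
The paper under review does not prove \thmref{thm:cosympsplit}; it merely quotes it from \cite{BO}, so there is no in-paper proof to compare against. That said, your argument is correct and is essentially the one given in \cite{BO}: pass from $\psi$ to the compact abelian group $A=\overline{\langle\psi\rangle}$ inside the Hermitian isometry group, use that $A_0$ is a torus to factor $\psi=\psi_0\cdot\exp(v/m)$ with $\psi_0$ of finite order $m=|A/A_0|$, and then shear along the one-parameter subgroup $t\mapsto\exp(tv/m)$ to identify $K_\psi\cong K_{\psi_0}$. The remaining identifications $K_{\psi_0}\cong(S^1\times K)/\mathbb{Z}_m$ and the fibration over $S^1/\mathbb{Z}_m$ with finite structure group $\langle\psi_0\rangle$ follow exactly as you describe. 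Your justification that $A/A_0$ is cyclic generated by $[\psi]$ (density of $\langle\psi\rangle$ in $A$ plus discreteness of $A/A_0$), that $\psi_0$ has order exactly $m$, and the equivariance computation for $\Xi$ are all in order; the freeness of the $\mathbb{Z}_m$-action is immediate from the translation on the circle factor.
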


The first true study of the topological properties of co-K\"ahler manifolds was
made in \cite{CdLM} where the focus was on things such as Betti numbers and
a modified Lefschetz property. The two results above allow us to say something
about the fundamental group and, moreover, to display the higher homotopy groups
as those of the constituent K\"ahler manifold $K$ (groups which, of course, are
generally unknown as well).

Here we use work of Verbitsky \cite{Verb} and the geometric structure of
co-K\"ahler manifolds to give a completely new decomposition of the cohomology
of a co-K\"ahler manifold in terms of the basic cohomology of the
associated transversally K\"ahler characteristic foliation. This leads to a
new, simpler proof of the ``Lefschetz'' property of \cite{CdLM}.

%%%%%%%%%%%%%%%%%%%%%%%%%%%%%%%%%%%%%%%%%%%%%%%%%%%%%%%%%%%%%%%%%%%%%%%%%%%%%%%%%
%										%
%	PARALLEL FORMS AND QUASI-ISOMORPHISMS ON CO-KÄHLER MANIFOLDS		%
%										%
%%%%%%%%%%%%%%%%%%%%%%%%%%%%%%%%%%%%%%%%%%%%%%%%%%%%%%%%%%%%%%%%%%%%%%%%%%%%%%%%%

\section{Parallel forms and quasi-isomorphisms on co-K\"ahler manifolds}\label{sec:parallel}
In \cite{Verb}, Verbitsky shows that, in case a smooth Riemannian manifold has a parallel
form, one can define a derivation of the de Rham algebra whose kernel is quasi-isomorphic to
the manifold's real cohomology algebra. In this section we will use this construction in the context of
co-K\"ahler manifolds, where the 1-form $\eta$ is parallel.
Once again, we shall see that some topological properties of co-K\"ahler manifolds may be
derived from corresponding properties of K\"ahler manifolds. This can be interpreted as a
geometric incarnation of our hereditary principle.

Let $M$ be a smooth manifold and let $\Omega^*(M;\bR)$ be the (real) de Rham algebra. A linear map
$f\in\textrm{End}(\Omega^*(M;\bR))$ has degree $|f|$ if $f\colon \Omega^k(M;\bR)\to\Omega^{k+|f|}(M;\bR)$.
Every linear map $f\colon \Omega^1(M;\bR)\to\Omega^{|f|+1}(M;\bR)$ can be extended to a graded derivation
$\rho_f$ of $\Omega^*(M;\bR)$ by imposing the Leibniz rule, i.e.
\begin{eqnarray}\label{Leibnitz}
\rho_f\big|_{\Omega^0(M;\bR)} & = & 0\nonumber \\
\rho_f\big|_{\Omega^1(M;\bR)} & = & f\nonumber \\
\rho_f(\alpha\wedge\beta) & = & \rho_f(\alpha)\wedge\beta+(-1)^{|\alpha||f|}\alpha\wedge\rho_f(\beta).
\end{eqnarray}
where $\alpha,\beta\in \Omega^*(M;\bR)$ and $|\alpha|$ is the degree of $\alpha$. (While
this apparently well-known fact is used in \cite{Verb}, it is not proved there. See
\cite[Lemma 4.3]{FN} for a proof.) Given two linear operators
$f,\tilde f\in\textrm{End}(\Omega^*(M;\bR))$, their \textit{supercommutator}
is defined as
\begin{equation*}%\label{supercommutator}
\{f,\tilde f\}=f\circ \tilde f-(-1)^{|f||\tilde f|}\tilde f\circ f.
\end{equation*}

Let $(M,g)$ be a smooth Riemannian manifold and let $\eta\in\Omega^k(M;\bR)$ be a $k$-form. Define a linear map $\bar{\eta}\colon \Omega^1(M;\bR)\to \Omega^{k-1}(M;\bR)$, with $|\bar{\eta}|=k-2$, by
\[
\bar{\eta}(\nu)=\ii_{\nu^\#}\eta\,,
\]
where $^\# \colon T^*M\to TM$ is the isomorphism given by the metric. Denote by
$\rho_\eta\colon \Omega^{*}(M;\bR)\to \Omega^{*+k-2}(M;\bR)$ the corresponding derivation.
Define the linear operator $d_\eta\colon \Omega^{*}(M;\bR)\to
\Omega^{*+k-1}(M;\bR)$ as
\[
d_\eta=\{d,\rho_\eta\}\,.
\]
Since $d_\eta$ is the supercommutator of two graded derivations, one sees easily that it is
itself a graded derivation of degree $k-1$ and that it supercommutes with $d$. As a consequence, $\ker(d_\eta)\subset\Omega^*(M;\bR)$ is a differential subalgebra and has the structure of a cdga.
In \cite{Verb}, Verbitsky proves following:

\begin{theorem}\label{Verbitsky}
Let $(M,g,\eta)$ be a compact Riemannian manifold equipped with a parallel form $\eta$.
Then the natural embedding
\[
(\ker(d_\eta),d)\hookrightarrow(\Omega^*(M;\bR),d) 
\]
is a quasi-isomorphism.
\end{theorem}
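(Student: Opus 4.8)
The plan is to combine the formal algebraic properties of $d_\eta$ recorded above with Hodge theory on the compact manifold $M$. Since the embedding $\iota\colon(\ker(d_\eta),d)\hookrightarrow(\Omega^*(M;\bR),d)$ is a morphism of cdga's, it induces a ring map $\iota^*$ on cohomology, and the goal is to show $\iota^*$ is an isomorphism. Throughout I would fix the Hodge decomposition $\Omega^*(M;\bR)=\cH^*\oplus\mathrm{im}(d)\oplus\mathrm{im}(d^*)$ with harmonic projector $H$ and Green operator $G$, so that $\Delta G=I-H$ and $dG=Gd$, $d^*G=Gd^*$, and recall that an exact harmonic form is zero.

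The technical core is a K\"ahler-type identity: \emph{since $\eta$ is parallel, $d_\eta$ supercommutes with the codifferential $d^*$}; combined with the already-noted relation $\{d,d_\eta\}=0$ this yields $[d_\eta,\Delta]=0$, hence also $[d_\eta,G]=[d_\eta,H]=0$. To prove $\{d_\eta,d^*\}=0$ I would write $d=\sum_i\epsilon(e^i)\nabla_{e_i}$ and $d^*=-\sum_i\ii_{e_i}\nabla_{e_i}$ for a local orthonormal coframe. The hypothesis $\nabla\eta=0$ forces the algebraic derivation $\rho_\eta$ to commute with every $\nabla_X$ (since $\nabla_X(\rho_\eta\alpha)=\rho_{\nabla_X\eta}\alpha+\rho_\eta(\nabla_X\alpha)$), so one can push $\rho_\eta$ freely past all connection terms; this collapses the identity $\{d_\eta,d^*\}=0$ — a priori a statement about second-order operators — to a pointwise identity among the zeroth-order operators $\epsilon(e^i)$, $\ii_{e_i}$ and $\rho_\eta$ on $\Lambda^*T^*M$, which is then checked by a direct computation (equivalently, by a Weitzenb\"ock-type argument, using that $R(X,Y)\eta=0$ when $\eta$ is parallel). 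Pinning down the correct algebraic identity and carrying out this reduction is the main obstacle; everything else is soft. (In the co-K\"ahler case of present interest $\eta$ is a $1$-form, $\rho_\eta=\ii_{\eta^\#}$, and Cartan's formula gives $d_\eta=\{d,\ii_{\eta^\#}\}=\cL_{\eta^\#}$; then $[d_\eta,d^*]=0$ is immediate from \lemref{lem:parallel}, since $\eta^\#=\xi$ is Killing.)

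Granting $[d_\eta,\Delta]=0$, surjectivity of $\iota^*$ follows by representing a class of $H^*_{dR}(M)$ by its harmonic form $h$: then $d_\eta h$ is harmonic, because $\Delta(d_\eta h)=d_\eta(\Delta h)=0$, while $d_\eta h=d(\rho_\eta h)$ is exact because $dh=0$, so $d_\eta h=0$, i.e. $h\in\ker(d_\eta)$, and the given de Rham class lies in the image of $\iota^*$. For injectivity, suppose $\alpha\in\ker(d_\eta)$ is closed with $[\alpha]=0$ in $H^*_{dR}(M)$; then $H\alpha=0$, so $\alpha=\Delta G\alpha=dd^*G\alpha+d^*Gd\alpha=dd^*G\alpha$ (as $d\alpha=0$). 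The primitive $\beta:=d^*G\alpha$ again lies in $\ker(d_\eta)$, because $d_\eta$ commutes with $G$ and supercommutes with $d^*$, whence $d_\eta\beta=\pm\,d^*G\,d_\eta\alpha=0$. Thus $\alpha$ is already exact inside $\ker(d_\eta)$, so $\iota^*$ is injective; together with surjectivity this shows that the embedding is a quasi-isomorphism.
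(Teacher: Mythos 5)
First, a point of reference: the paper does not prove this statement at all --- it is imported verbatim from \cite{Verb} --- so the relevant comparison is with Verbitsky's original argument, and your skeleton is in fact the same as his: establish a generalized K\"ahler identity $\{d_\eta,d^*\}=0$ from parallelism of $\eta$, deduce $[d_\eta,\Delta]=0$, and then run Hodge theory. The Hodge-theoretic half of your argument is complete and correct: surjectivity because a harmonic representative $h$ satisfies $d_\eta h=d(\rho_\eta h)$, which is exact and (by commutation with $\Delta$) harmonic, hence zero; injectivity because the canonical primitive $d^*G\alpha$ of an exact, $d$-closed, $d_\eta$-closed form $\alpha$ is again $d_\eta$-closed. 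There is nothing to object to in that part.

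The gap is exactly where you flag it: the identity $\{d_\eta,d^*\}=0$ is the entire analytic content of the theorem, and you assert rather than prove it. Moreover the proposed reduction is slightly too quick as stated. Writing $d=\sum_i\epsilon(e^i)\nabla_{e_i}$ and $d^*=-\sum_i\ii_{e_i}\nabla_{e_i}$ and pushing $\rho_\eta$ past the connection (legitimate, since $\nabla\eta=0$), the supercommutator $\{d_\eta,d^*\}$ acquires a second-order part $\sum_{i,j}C_{ij}\nabla_{e_i}\nabla_{e_j}$ with algebraic coefficients, and one must check two separate things: (a) the part of $C_{ij}$ symmetric in $i,j$ vanishes pointwise --- this is pure multilinear algebra and does hold, because $\{\{\epsilon(e^i),\rho_\eta\},\ii_{e_j}\}=\pm\,\epsilon(\ii_{e_j}\ii_{e_i}\eta)$ is antisymmetric in $i,j$ --- and (b) the leftover antisymmetric part, which contracts against the curvature $[\nabla_{e_i},\nabla_{e_j}]=R(e_i,e_j)$ rather than disappearing, also vanishes, using that the holonomy action annihilates the parallel form $\eta$. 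You gesture at both (a) and (b) but carry out neither, so what you have is a correct and well-organized outline of Verbitsky's proof rather than a proof. That said, your parenthetical observation completely disposes of the only case this paper actually uses: for a parallel $1$-form, $d_\eta=L_\xi$ with $\xi=\eta^\#$ parallel, hence Killing, so $L_\xi$ commutes with $d$, $d^*$ and $\Delta$ outright, and your Hodge argument then yields a full, self-contained proof of \thmref{Verbitsky} in the co-K\"ahler setting of \corref{quism2}.
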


Let $(M,g,\eta)$ be a Riemannian manifold equipped with a parallel form $\eta$.
\thmref{Verbitsky} says that we can recover the cohomology of $M$ by considering the subalgebra
of forms $\nu$ which are annihilated by $d_\eta$, i.e. those for which $d_\eta(\nu)=0$.
This allows one to greatly simplify, in many cases, the computation of the de Rham cohomology
of this kind of manifold.

Recall from \lemref{lem:parallel} that the 1-form $\eta$ is parallel on a co-K\"ahler manifold.
According to Verbitsky's construction, there is a derivation
$d_\eta$ of $(\Omega^*(M;\bR),d)$ described explicitly as follows.
\begin{lemma}
Let $(M,J,\eta,\xi,g)$ be a co-K\"ahler manifold. Then $d_\eta=L_\xi$, where $L_\xi$
denotes the Lie derivative in the direction of the vector field $\xi$.
\end{lemma}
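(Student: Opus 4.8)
The plan is to compute $d_\eta$ directly from its definition $d_\eta = \{d,\rho_\eta\}$ using the fact that $\eta$ is a $1$-form, so that $\bar\eta$ has degree $|\bar\eta| = 1-2 = -1$ and $\rho_\eta$ is a derivation of degree $k-1 = 0$, hence $d_\eta$ is a derivation of degree $k-1 = 0$ as well. Both $d_\eta$ and $L_\xi$ are degree-zero derivations of $\Omega^*(M;\bR)$, and a degree-zero derivation of the de Rham algebra is determined by its action on $\Omega^0(M;\bR)$ and $\Omega^1(M;\bR)$ (using the Leibniz rule and the fact that locally the algebra is generated in degrees $0$ and $1$). So it suffices to check agreement there.

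First I would handle $\Omega^0$: for $f \in \Omega^0(M;\bR)$ we have $\rho_\eta(f) = 0$ by the first line of \eqref{Leibnitz}, so $d_\eta(f) = d\rho_\eta(f) + \rho_\eta(df) = \rho_\eta(df) = \bar\eta(df) = \imath_{(df)^\#}\eta$. Now $(df)^\# = \operatorname{grad} f$, so $\imath_{(df)^\#}\eta = \eta(\operatorname{grad} f) = g(\xi, \operatorname{grad} f) = df(\xi) = \xi(f) = L_\xi f$, using $\eta = g(\xi,-)$ (which follows from the compatibility relations, or can be taken as how $\eta$ and $\xi$ are related via the metric in the almost contact metric structure). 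So $d_\eta$ and $L_\xi$ agree on functions.

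Next I would handle $\Omega^1$: for $\nu \in \Omega^1(M;\bR)$, $d_\eta(\nu) = d\rho_\eta(\nu) + \rho_\eta(d\nu)$, with $\rho_\eta(\nu) = \bar\eta(\nu) = \imath_{\nu^\#}\eta = \eta(\nu^\#)$, a function, so $d\rho_\eta(\nu) = d(\eta(\nu^\#))$; and $\rho_\eta(d\nu) = \imath_\xi d\nu$ by the explicit description of $\rho_\eta$ on $2$-forms as contraction with $\xi$ (since $\bar\eta$ on a $2$-form $\alpha$ gives $\imath_{\alpha^\#}$-type terms, but on $1$-forms wedged together it is precisely $\imath_\xi$ — here I use $\bar\eta(\nu) = \imath_{\nu^\#}\eta = \langle \nu, \eta\rangle = \nu(\xi)$ again, together with the Leibniz rule, to see that $\rho_\eta$ restricted to forms is contraction $\imath_\xi$ up to the sign bookkeeping, because $\eta = g(\xi,-)$ makes $\bar\eta(\nu) = \nu(\xi) = \imath_\xi \nu$). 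Then Cartan's magic formula $L_\xi = \imath_\xi d + d\imath_\xi$ gives $L_\xi \nu = \imath_\xi d\nu + d(\imath_\xi \nu) = \imath_\xi d\nu + d(\nu(\xi))$, matching $d_\eta(\nu)$ term by term. The cleanest way to organize the whole argument, in fact, is to first establish the operator identity $\rho_\eta = \imath_\xi$ on all of $\Omega^*(M;\bR)$ — which follows because both are degree-$(-1)$ derivations agreeing on $\Omega^0$ (both zero) and on $\Omega^1$ (both equal $\nu \mapsto \nu(\xi)$, using $\eta = g(\xi,-)$) — and then $d_\eta = \{d,\rho_\eta\} = \{d,\imath_\xi\} = d\imath_\xi + \imath_\xi d = L_\xi$ is exactly Cartan's formula.

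The main obstacle, though it is more a matter of care than of genuine difficulty, is pinning down the identification $\bar\eta(\nu) = \imath_\xi\nu$ for $1$-forms: one must use that the musical isomorphism sends $\eta$ to $\xi$, i.e. that $\eta = \xi^\flat$, so that $\imath_{\nu^\#}\eta = g(\nu^\#, \xi) = g(\xi, \nu^\#) = \nu(\xi) = \imath_\xi \nu$ — this rests on $\eta$ and $\xi$ being metric-dual, which is part of what it means for $(J,\xi,\eta,g)$ to be an almost contact metric structure (or is easily checked from \eqref{eq:1}). Once $\rho_\eta = \imath_\xi$ is in hand, the identity $d_\eta = L_\xi$ is immediate from the Cartan formula, so there is essentially nothing further to prove; I would keep the write-up to these two observations.
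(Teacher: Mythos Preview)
Your proposal is correct and follows essentially the same route as the paper: identify $\rho_\eta$ with $\imath_\xi$ (both being degree $-1$ derivations agreeing on functions and on $1$-forms via $\eta=\xi^\flat$), and then conclude $d_\eta=\{d,\imath_\xi\}=L_\xi$ by Cartan's magic formula. One small slip to fix: early on you state that $\rho_\eta$ has degree $k-1=0$, but in fact $\rho_\eta$ has degree $k-2=-1$ (as you correctly use later when matching it with $\imath_\xi$); it is $d_\eta$ that has degree $k-1=0$.
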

\begin{proof}
Denote by $\bar{\eta}\colon\Omega^*(M;\bR)\to\Omega^*(M;\bR)$ the operator which acts on
1-forms as $\bar{\eta}(\nu)=\ii_{\nu^\#}\eta$. Since $|\bar{\eta}|=-1$, we have $d_\eta=\{d,\rho_\eta\}=d\circ\rho_\eta+\rho_\eta\circ d$, and $|d_\eta|=0$. To prove the lemma,
by \cite{FN}, it is enough to consider the action of $d_\eta$ on $0$- and $1$-forms.
Now, according to the formulas in (\ref{Leibnitz}) extending $\bar\eta$ to a derivation $\rho_\eta$,
on 1-forms we have $\rho_\eta=\bar\eta$ and
\[
\bar{\eta}(\nu)=\ii_{\nu^\#}\eta=\eta(\nu^\#)=g(\xi,\nu^\#)=\nu(\xi)=\ii_\xi\nu.
\]
Note that this identifies $\bar\eta=\ii_\xi$ which is already a derivation, so
$\rho_\eta=\ii_\xi$. Hence, $(d\circ\bar{\eta})(\nu)=d\ii_\xi\nu$ and, on the other hand,
$(\bar{\eta}\circ d)(\nu)= \ii_\xi(d\nu)$. By Cartan's magic formula, we obtain
\[
d_\eta(\nu)=(d\circ\bar{\eta})(\nu)+(\bar{\eta}\circ d)(\nu)=d\ii_\xi\nu+\ii_\xi(d\nu)=L_\xi(\nu).
\]
Thus $d_\eta=L_\xi$ on 1-forms. On a $0$-form (i.e. a function) $f$, we have
\[
d_\eta(f)=\rho_\eta(df)=\bar\eta(df)=df(\xi)=\xi(f)=L_\xi(f)
\]
by the calculation above. Since $d_\eta$ and $L_\xi$ are graded derivations of
the de Rham algebra which agree on $0$-forms and $1$-forms, the result follows.
\end{proof}

Let us consider the following graded differential subalgebra $(\Omega_\eta^*(M),d)$
of $(\Omega^*(M;\bR),d)$ given by
\[
\Omega_\eta^*(M)=\{\nu\in\Omega^*(M;\bR) \ | \ L_{\xi}(\nu)=0\}\,.
\]
As a consequence of \thmref{Verbitsky}, we obtain the following result.
\begin{corollary}\label{quism2}
On a compact co-K\"ahler manifold, the natural inclusion
\[
(\Omega^*_\eta(M),d)\hookrightarrow (\Omega^*(M;\bR),d)
\]
is a cdga quasi-isomorphism and
\[H^*(M;\R) \cong H^*_\eta(M),\]
where $H^*_\eta(M)$ is the cohomology of $(\Omega^*_\eta(M),d)$.
\end{corollary}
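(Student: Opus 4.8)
The plan is to deduce Corollary~\ref{quism2} directly from Theorem~\ref{Verbitsky} and the preceding lemma, with essentially no extra work beyond identifying the two subalgebras in question. First I would invoke Lemma~\ref{lem:parallel} to observe that $\eta$ is a parallel $1$-form on a compact co-K\"ahler manifold $M$, so that the triple $(M,g,\eta)$ satisfies the hypotheses of Theorem~\ref{Verbitsky}. Applying that theorem yields that the inclusion $(\ker(d_\eta),d)\hookrightarrow(\Omega^*(M;\bR),d)$ is a quasi-isomorphism of cdga's. Next I would appeal to the lemma just proved, which identifies the Verbitsky derivation in this setting as $d_\eta=L_\xi$; consequently $\ker(d_\eta)=\{\nu\in\Omega^*(M;\bR)\mid L_\xi(\nu)=0\}$, which is by definition exactly $\Omega^*_\eta(M)$. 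Thus the abstract quasi-isomorphism of Theorem~\ref{Verbitsky} becomes the concrete inclusion $(\Omega^*_\eta(M),d)\hookrightarrow(\Omega^*(M;\bR),d)$ asserted in the corollary.

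It then remains only to record the cdga structure and the cohomology isomorphism. Since $d_\eta=L_\xi$ supercommutes with $d$ (as noted in the text, being the supercommutator of two derivations that each supercommute with $d$, or alternatively because $[d,L_\xi]=0$ is Cartan's identity), the kernel $\Omega^*_\eta(M)$ is closed under $d$ and under the wedge product, hence is a sub-cdga; I would state this explicitly. Finally, a quasi-isomorphism of cdga's induces an isomorphism on cohomology, so $H^*_\eta(M)\cong H^*(M;\bR)$, which gives the displayed isomorphism and completes the proof.

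There is no real obstacle here: the corollary is a formal specialization of Theorem~\ref{Verbitsky}, and the only content is the translation $\ker(d_\eta)=\Omega^*_\eta(M)$ supplied by the lemma identifying $d_\eta=L_\xi$. If anything, the one point deserving a sentence of care is the verification that $\Omega^*_\eta(M)$ is genuinely a differential subalgebra — i.e. that $L_\xi$ annihilating $\alpha$ and $\beta$ forces $L_\xi$ to annihilate $\alpha\wedge\beta$ and $d\alpha$ — but this is immediate from $L_\xi$ being a derivation commuting with $d$. So the write-up will be short: cite Lemma~\ref{lem:parallel} for parallelism of $\eta$, apply Theorem~\ref{Verbitsky}, substitute $d_\eta=L_\xi$ from the lemma, and conclude.
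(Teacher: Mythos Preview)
Your proposal is correct and matches the paper's approach exactly: the corollary is stated there as an immediate consequence of Theorem~\ref{Verbitsky} together with the preceding lemma identifying $d_\eta=L_\xi$, with no additional argument given. Your added remarks about $\Omega^*_\eta(M)$ being a differential subalgebra merely make explicit what the paper already notes in the discussion before Theorem~\ref{Verbitsky}.
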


We shall use the cdga $\Omega^*_\eta(M)$ to give an alternative proof of the Lefschetz property
and of formality for co-K\"ahler manifolds in the hereditary framework of the rest of the paper.

Let $(M,J,\xi,\eta,g)$ be a compact co-K\"ahler manifold. In \cite{CdLM}, the authors defined a Lefschetz
map on harmonic forms and proved that it is an isomorphism. This is, of course, different from the K\"ahler context,
where the Lefschetz map can be defined directly on all forms and depends only on the underlying symplectic
structure, not on the metric. On forms, the Lefschetz map is $\cL^{n-p}\colon\Omega^p(M;\R)\to \Omega^{2n+1-p}(M,\R)$, given by
\begin{equation}\label{Lefschetz}
\a\mapsto\omega^{n-p+1}\wedge\ii_\xi\a+\omega^{n-p}\wedge\eta\wedge\a
\end{equation}
One sees immediately that the Lefschetz map does not send closed (resp. exact) forms to closed (resp. exact) forms, as it happens in the K\"ahler case,
hence does not descend to a map on cohomology. However, by restricting the Lefschetz map to the cdga
$\Omega^*_\eta(M)$, we \emph{are able to} descend to cohomology.
%It turns out that such restriction is well defined, and (super)commutes with the exterior derivative,
%hence descends to cohomology.
\begin{proposition}
The Lefschetz map \eqref{Lefschetz} restricts to a map
\[
\cL^{n-p}\colon\Omega^p_\eta(M)\to \Omega^{2n+1-p}_\eta(M) 
\]
for $0\leq p \leq n$, which sends closed (resp. exact) forms to closed (resp. exact) forms. Hence, $\cL$
descends to the cohomology $H^*_\eta(M)\cong H^*(M;\R)$.
\end{proposition}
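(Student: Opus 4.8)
The plan is to check, in order, that $\cL^{n-p}$ preserves the subalgebra $\Omega^*_\eta(M)$, that it carries closed forms to closed forms, and that it carries forms exact in $\Omega^*_\eta(M)$ to forms exact in $\Omega^*_\eta(M)$; the statement about cohomology then follows formally, using the isomorphism $H^*_\eta(M)\cong H^*(M;\bR)$ of \corref{quism2}. Throughout, the only inputs are the co-symplectic identities $d\omega = 0 = d\eta$, the relations $\ii_\xi\omega = 0$ and $\ii_\xi\eta = 1$, Cartan's formula, and the fact that $L_\xi$ is a graded derivation of $\Omega^*(M;\bR)$ commuting with $d$ and with $\ii_\xi$ (since $[\xi,\xi]=0$).

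First I would record that $\omega$ and $\eta$ lie in $\Omega^*_\eta(M)$ and that $\ii_\xi$ maps $\Omega^*_\eta(M)$ into itself: indeed $L_\xi\omega = d\,\ii_\xi\omega + \ii_\xi\,d\omega = 0$ and $L_\xi\eta = d\,\ii_\xi\eta + \ii_\xi\,d\eta = d(1) = 0$, while for $\nu\in\Omega^*_\eta(M)$ one has $L_\xi(\ii_\xi\nu) = \ii_\xi(L_\xi\nu) = 0$. Since $\Omega^*_\eta(M) = \ker L_\xi$ is a subalgebra and $L_\xi$ obeys the Leibniz rule, applying $L_\xi$ to each of the two summands $\omega^{n-p+1}\wedge\ii_\xi\alpha$ and $\omega^{n-p}\wedge\eta\wedge\alpha$ of $\cL^{n-p}(\alpha)$ yields $0$ whenever $\alpha\in\Omega^p_\eta(M)$. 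Hence $\cL^{n-p}$ restricts to a map $\Omega^p_\eta(M)\to\Omega^{2n+1-p}_\eta(M)$.

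Next, for $\alpha\in\Omega^p_\eta(M)$ with $d\alpha = 0$, expanding $d\,\cL^{n-p}(\alpha)$ by the Leibniz rule and using $d\omega = d\eta = 0$ collapses everything to $\omega^{n-p+1}\wedge d(\ii_\xi\alpha)$ plus a multiple of $\omega^{n-p}\wedge\eta\wedge d\alpha$; the latter vanishes since $d\alpha = 0$, and $d(\ii_\xi\alpha) = L_\xi\alpha - \ii_\xi\,d\alpha = 0$ by Cartan's formula. So closed forms go to closed forms. For exactness I would take $\alpha = d\beta$ with $\beta\in\Omega^{p-1}_\eta(M)$ and run the same computation backwards: since $\ii_\xi\,d\beta = L_\xi\beta - d\,\ii_\xi\beta = -\,d\,\ii_\xi\beta$, one obtains
\[
\cL^{n-p}(d\beta) = -\,d\!\left(\omega^{n-p+1}\wedge\ii_\xi\beta + \omega^{n-p}\wedge\eta\wedge\beta\right),
\]
and the form inside the parentheses lies in $\Omega^*_\eta(M)$ by the previous paragraph. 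Thus $\cL^{n-p}$ sends forms that are exact in $\Omega^*_\eta(M)$ to forms that are exact in $\Omega^*_\eta(M)$, and therefore induces a well-defined endomorphism of $H^*_\eta(M)$; under the isomorphism $H^*_\eta(M)\cong H^*(M;\bR)$ this becomes the asserted Lefschetz map on de Rham cohomology.

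The one point that needs care, and which I would treat as the crux, is this last exactness step: it is not enough that $\cL^{n-p}(d\beta)$ be exact in the full de Rham complex — one needs a primitive lying inside the sub-cdga $\Omega^*_\eta(M)$, and it is precisely the hereditary vanishings $L_\xi\omega = 0$ and $L_\xi\eta = 0$ (ultimately, that $\eta$ is parallel, by \lemref{lem:parallel}) that guarantee the natural primitive $\omega^{n-p+1}\wedge\ii_\xi\beta + \omega^{n-p}\wedge\eta\wedge\beta$ does belong to $\Omega^*_\eta(M)$.
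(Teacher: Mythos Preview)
Your proof is correct and follows essentially the same route as the paper: verify $L_\xi$ kills $\cL^{n-p}(\alpha)$ via the derivation property and $L_\xi\omega=L_\xi\eta=L_\xi\alpha=0$, then use Cartan's formula together with $d\omega=d\eta=0$ to handle the closed and exact cases by direct computation, arriving at the identical primitive $-\omega^{n-p+1}\wedge\ii_\xi\beta-\omega^{n-p}\wedge\eta\wedge\beta$. Your argument is marginally more explicit in two places --- invoking $[L_\xi,\ii_\xi]=\ii_{[\xi,\xi]}=0$ to show $\ii_\xi$ preserves $\Omega^*_\eta(M)$, and stressing that the primitive itself lies in $\Omega^*_\eta(M)$ --- but these are refinements of presentation rather than a different strategy.
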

\begin{proof}
We first show that if $\alpha\in\Omega^p_\eta(M)$, then $\cL^{n-p}(\alpha)\in\Omega^{2n+1-p}_\eta(M)$.
\begin{align*}
L_\xi(\cL^{n-p}(\alpha))&=L_\xi(\omega^{n-p+1}\wedge\ii_\xi \a+\omega^{n-p}\wedge\eta\wedge \a)=\omega^{n-p+1}\wedge L_\xi(\ii_\xi\a)=\\
&=\omega^{n-p+1}\wedge \ii_\xi d\imath_\xi\a=-\omega^{n-p+1}\wedge \ii_\xi\ii_\xi d\a=0,
\end{align*}
where we have used the facts that the Lie derivative $L_\xi$ is a derivation, $L_\xi=\imath_\xi d+ d \imath_\xi$ (Cartan's
Magic formula), $\ii_\xi \ii_\xi=0$ and $L_\xi\omega=L_\xi\eta=L_\xi\a=0$. For $\alpha$ a closed form in $\Omega^p_\eta(M)$, we have
\[
d(\cL^{n-p}(\alpha))=d(\omega^{n-p+1}\wedge\ii_\xi \a+\omega^{n-p}\wedge\eta\wedge \a)=\omega^{n-p+1}\wedge d \ii_\xi\a=0\,;
\]
for $\beta\in\Omega^{p-1}_\eta(M)$,
\begin{align*}
\cL^{n-p}(d\beta)&=\omega^{n-p+1}\wedge\ii_\xi d\beta+\omega^{n-p}\wedge\eta\wedge d\beta=\\
&=-\omega^{n-p+1}\wedge d \ii_\xi\beta-d(\omega^{n-p}\wedge\eta\wedge \beta)=\\
&=d(-\omega^{n-p+1}\wedge \ii_\xi\beta-\omega^{n-p}\wedge\eta\wedge \beta).
\end{align*}
\end{proof}

\noindent Consider the following two subalgebras of $\Omega^*_\eta(M)$:
\[
\Omega^p_1(M)=\{\alpha\in\Omega^p_\eta(M) \ | \ \imath_\xi\alpha=0\}, \
\Omega^p_2(M)=\bQ \oplus \{\alpha\in\Omega^p_\eta(M) \ | \ \eta\wedge\alpha=0\}.
\]
\begin{lemma}
$\Omega^p_\eta(M)=\Omega^p_1(M)\oplus\Omega^p_2(M)$ for all $p>0$ and $\Omega^*_i(M)$ is a differential
subalgebra of $\Omega^*_\eta(M)$, $i=1,2$.
\end{lemma}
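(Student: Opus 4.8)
The plan is to exhibit an explicit splitting of $\Omega^p_\eta(M)$ and then to check the (differential) subalgebra properties by routine bracket computations, using only the structural facts already recorded above, namely $\ii_\xi\eta=1$, $L_\xi\eta=0$ and $d\eta=0$, together with $\ii_\xi\ii_\xi=0$, the relation $\{L_\xi,\ii_\xi\}=0$, $L_\xi d=dL_\xi$, and Cartan's formula $L_\xi=d\ii_\xi+\ii_\xi d$.

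For the decomposition, given $\alpha\in\Omega^p_\eta(M)$ with $p>0$ I would set $\alpha_2:=\eta\wedge\ii_\xi\alpha$ and $\alpha_1:=\alpha-\alpha_2$, so that $\alpha=\alpha_1+\alpha_2$. One computes $\ii_\xi\alpha_1=\ii_\xi\alpha-(\ii_\xi\eta)\,\ii_\xi\alpha+\eta\wedge\ii_\xi\ii_\xi\alpha=0$ using $\ii_\xi\eta=1$ and $\ii_\xi\ii_\xi=0$; and $L_\xi\alpha_2=(L_\xi\eta)\wedge\ii_\xi\alpha+\eta\wedge\ii_\xi L_\xi\alpha=0$ using $L_\xi\eta=0$ and the fact that $L_\xi$ commutes with $\ii_\xi$, so $\alpha_2\in\Omega^p_\eta(M)$ and hence $\alpha_1=\alpha-\alpha_2\in\Omega^p_\eta(M)$ as well. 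Since $\ii_\xi\alpha_1=0$ we get $\alpha_1\in\Omega^p_1(M)$, and since $\eta\wedge\alpha_2=\eta\wedge\eta\wedge\ii_\xi\alpha=0$ we get $\alpha_2\in\Omega^p_2(M)$ (the extra copy of $\bQ$ plays no role when $p>0$). For directness, if $\alpha\in\Omega^p_1(M)\cap\Omega^p_2(M)$ with $p>0$, then $\ii_\xi\alpha=0$ and $\eta\wedge\alpha=0$; applying $\ii_\xi$ to the second relation gives $\alpha=\eta\wedge\ii_\xi\alpha=0$. Thus $\Omega^p_\eta(M)=\Omega^p_1(M)\oplus\Omega^p_2(M)$.

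The subalgebra assertions are then short. For $\Omega^*_1(M)$: if $\ii_\xi\alpha=\ii_\xi\beta=0$ then $\ii_\xi(\alpha\wedge\beta)=\ii_\xi\alpha\wedge\beta\pm\alpha\wedge\ii_\xi\beta=0$, and if in addition $\alpha\in\Omega^*_1(M)$ then $\ii_\xi d\alpha=L_\xi\alpha-d\ii_\xi\alpha=0$ and $L_\xi d\alpha=dL_\xi\alpha=0$, so $d\alpha\in\Omega^*_1(M)$. For $\Omega^*_2(M)$: if $\eta\wedge\alpha=\eta\wedge\beta=0$ then $\eta\wedge(\alpha\wedge\beta)=(\eta\wedge\alpha)\wedge\beta=0$, and if $\alpha\in\Omega^*_2(M)$ then $0=d(\eta\wedge\alpha)=-\eta\wedge d\alpha$ because $d\eta=0$, which together with $L_\xi d\alpha=dL_\xi\alpha=0$ gives $d\alpha\in\Omega^*_2(M)$; both contain the constants, so they are unital differential subalgebras of $\Omega^*_\eta(M)$. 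There is no genuine obstacle here: every step collapses to a one-line computation with the brackets among $d$, $\ii_\xi$, $L_\xi$ and the three facts $\ii_\xi\eta=1$, $L_\xi\eta=0$, $d\eta=0$. The only point that needs attention is to keep the degree-$0$ components out of the direct-sum statement, which is precisely why the hypothesis $p>0$ and the extra summand $\bQ$ in the definition of $\Omega^p_2(M)$ appear.
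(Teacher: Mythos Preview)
Your argument is correct and follows exactly the same route as the paper: the same explicit splitting $\alpha=(\alpha-\eta\wedge\ii_\xi\alpha)+\eta\wedge\ii_\xi\alpha$, the same intersection argument via applying $\ii_\xi$ to $\eta\wedge\alpha=0$, and the same $d$-stability checks using Cartan's formula and $d\eta=0$. You are in fact slightly more thorough than the paper, which does not explicitly verify the wedge-closure or that $\alpha_2\in\Omega^p_\eta(M)$, so nothing needs to change.
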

\begin{proof}
Given any $\alpha\in\Omega^p_\eta(M)$, we can write tautologically
\begin{equation}\label{decomposition}
\a=(\a-\eta\wedge\ii_\xi\a)+\eta\wedge\ii_\xi\a\eqcolon\a_1+\a_2.
\end{equation}
Since $\eta(\xi)=1$, we see immediately that $\ii_\xi\a_1=0$, so $\a_1\in\Omega^p_1(M)$.
Clearly $\a_2\in\Omega^p_2(M)$. Now suppose that $\a\in\Omega^p_1(M)\cap\Omega^p_2(M)$.
Then $\eta\wedge\a=0$ and hence, by applying $\ii_\xi$, we get $0=\a-\eta\wedge\ii_\xi\a=\alpha$,
which gives $\alpha=0$.

Now, if $\a\in\Omega^p_\eta(M)$, then $L_\xi\a=d\ii_\xi\a+\ii_\xi d\a=0$, so $\ii_\xi d\a=-d\ii_\xi\a$.
If $\a\in\Omega^p_1(M)$, then we also have $\ii_\xi(d\a)=-d\ii_\xi\a=0$ since $\a\in\Omega^p_1(M)$.
Hence $d\colon \Omega^p_1(M)\to\Omega^{p+1}_1(M)$.

Finally, suppose $\a\in\Omega^p_2(M)$. Then, since $\eta$ is closed, we have $\eta\wedge d\a=-d(\eta\wedge\a)=0$.
Hence $d\colon \Omega^p_2(M)\to\Omega^{p+1}_2(M)$.
\end{proof}

As a consequence, the cohomology $H^p_\eta(M)$ of the cdga $\Omega^*_\eta(M)$ can be written as
\[
H^p_\eta(M)\cong H^p_1(M)\oplus H^p_2(M)\,,
\]
where $H^p_i(M)=H^p(\Omega^*_i(M))$, $i=1,2$. Now consider a form $\a\in\Omega^p_2(M)$. Applying the
derivation $\ii_\xi$ to the equation $\eta\wedge\a=0$, we obtain $\a=\eta\wedge\ii_\xi\a$, where clearly $\ii_\xi\a\in\Omega^{p-1}_1(M)$. This tells us that $\Omega^p_2(M)=\eta\wedge\Omega^{p-1}_1(M)$ and,
since $d\eta=0$, we have a differential splitting
\[
\Omega^p_\eta(M)=\Omega^p_1(M)\oplus \eta \wedge\Omega^{p-1}_1(M)\,.
\]
From this, we immediately deduce

\begin{corollary}\label{corollary1}
The cohomology $H^p_\eta(M)$ of $\Omega^*_\eta(M)$ splits as
\[
H^p_\eta(M)=H^p_1(M)\oplus [\eta]\wedge H^{p-1}_1(M)
\]
\end{corollary}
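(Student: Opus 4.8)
The plan is to observe that the differential splitting
\[
\Omega^p_\eta(M)=\Omega^p_1(M)\oplus \eta \wedge\Omega^{p-1}_1(M)
\]
established just above is a splitting of \emph{cochain complexes}, and then pass to cohomology. First I would check that $d$ preserves each summand: it preserves $\Omega^*_1(M)$ by the preceding lemma, and since $d\eta=0$ together with $d\bigl(\Omega^{p-1}_1(M)\bigr)\subset\Omega^p_1(M)$, we get $d(\eta\wedge\gamma)=-\,\eta\wedge d\gamma\in\eta\wedge\Omega^p_1(M)$ for every $\gamma\in\Omega^{p-1}_1(M)$. Thus $(\Omega^*_\eta(M),d)$ decomposes as an internal direct sum of the subcomplexes $(\Omega^*_1(M),d)$ and $(\eta\wedge\Omega^{*-1}_1(M),d)$, and because cohomology commutes with finite direct sums of complexes,
\[
H^p_\eta(M)\cong H^p_1(M)\ \oplus\ H^p\bigl(\eta\wedge\Omega^{*-1}_1(M),d\bigr).
\]

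Next I would identify the second summand with $[\eta]\wedge H^{p-1}_1(M)$. Wedging with $\eta$ gives a degree $+1$ linear map $\gamma\mapsto\eta\wedge\gamma$ from $\Omega^{*-1}_1(M)$ onto $\eta\wedge\Omega^{*-1}_1(M)$; it is injective because applying $\ii_\xi$ to $\eta\wedge\gamma=0$ with $\ii_\xi\gamma=0$ gives $\gamma=\ii_\xi(\eta\wedge\gamma)=0$ (using $\ii_\xi\eta=\eta(\xi)=1$). Since $d\eta=0$, one has $\eta\wedge d\gamma=-d(\eta\wedge\gamma)$, so this map is an isomorphism of the complex $(\Omega^{*-1}_1(M),-d)$ onto $(\eta\wedge\Omega^{*-1}_1(M),d)$. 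As $(\Omega^*_1(M),d)$ and $(\Omega^*_1(M),-d)$ have exactly the same cocycles and coboundaries, their cohomologies coincide, whence $H^p\bigl(\eta\wedge\Omega^{*-1}_1(M),d\bigr)\cong H^{p-1}_1(M)$, the isomorphism carrying a class $[\gamma]$ to $[\eta\wedge\gamma]=[\eta]\wedge[\gamma]$.

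Combining the two displays yields $H^p_\eta(M)=H^p_1(M)\oplus [\eta]\wedge H^{p-1}_1(M)$, as claimed, the decomposition being the internal one induced by the chain-level splitting. I do not anticipate a genuine obstacle: the only points needing care are that the direct sum decomposition respects $d$ (immediate from $d\eta=0$ and the preceding lemma) and the harmless sign introduced by commuting $\eta$ past $d$, which does not affect cohomology.
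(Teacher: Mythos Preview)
Your argument is correct and is precisely the elaboration of what the paper intends: the paper states the differential splitting $\Omega^p_\eta(M)=\Omega^p_1(M)\oplus \eta\wedge\Omega^{p-1}_1(M)$ and then writes ``From this, we immediately deduce'' the corollary, leaving the verification that $d$ respects the summands and the identification of $H^*(\eta\wedge\Omega_1^{*-1}(M))$ with $[\eta]\wedge H_1^{*-1}(M)$ to the reader. You have filled in exactly those details, so your approach coincides with the paper's.
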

\noindent This corollary shows that the cohomology of $\Omega^*_\eta(M)$ only depends on the cohomology
of the cdga $\Omega^*_1(M)$.

Let us now consider the \emph{characteristic foliation} $\cF_\xi$ on a compact co-K\"ahler manifold
$(M,J,\xi,\eta,g)$ given by $(\cF_\xi)_x=\langle \xi_x\rangle$ for every $x\in M$. Such a foliation
is Riemannian and transversally K\"ahler. Indeed, at every point $x\in M$, the orthogonal space to
$\xi$ is endowed with a K\"ahler structure given by $(J,g,\omega)$, and all these data vary smoothly
with $x$.

Recall that, given a foliation $\cF$ on a compact manifold $M$, the \emph{basic cohomology} is defined as
the cohomology of the complex $\Omega^*(M,\cF)$, where
\[
\Omega^p(M,\cF)=\{\a\in\Omega^p(M) \ | \ \imath_X\a=\imath_Xd\a=0 \  \forall X\in\mathfrak{X}(\cF)\}
\]
and $\mathfrak{X}(\cF)$ denotes the subalgebra of vector fields tangent to $\cF$.
In our case, we have the following.
\begin{lemma}\label{Lemma1}
Let $(M,J,\xi,\eta,g)$ be a compact co-K\"ahler manifold and let $\cF_\xi$ be the characteristic foliation.
Then $\Omega^*_1(M)=\Omega^*(M,\cF_\xi)$.
\end{lemma}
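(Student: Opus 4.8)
The plan is to unwind both definitions and check that the two subspaces of $\Omega^p(M)$ coincide in each degree. Recall $\Omega^p_1(M) = \{\a \in \Omega^p_\eta(M) \mid \ii_\xi\a = 0\}$, which by definition of $\Omega^*_\eta(M)$ means $\a$ satisfies $\ii_\xi\a = 0$ \emph{and} $L_\xi\a = 0$. On the other hand, since the characteristic foliation $\cF_\xi$ is one-dimensional with $\mathfrak{X}(\cF_\xi) = \{f\xi \mid f \in C^\infty(M)\}$, the basic condition $\ii_X\a = \ii_X d\a = 0$ for all $X \in \mathfrak{X}(\cF_\xi)$ reduces — by $C^\infty(M)$-linearity of interior product in the vector-field slot — to the single pair of conditions $\ii_\xi\a = 0$ and $\ii_\xi d\a = 0$. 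So $\Omega^p(M,\cF_\xi) = \{\a \mid \ii_\xi\a = 0,\ \ii_\xi d\a = 0\}$.

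First I would show $\Omega^p_1(M) \subseteq \Omega^p(M,\cF_\xi)$. If $\a \in \Omega^p_1(M)$, then $\ii_\xi\a = 0$ by definition, and $L_\xi\a = 0$; Cartan's magic formula $L_\xi = d\ii_\xi + \ii_\xi d$ then gives $\ii_\xi d\a = L_\xi\a - d\ii_\xi\a = 0 - d(0) = 0$, so $\a$ is basic. For the reverse inclusion, suppose $\a \in \Omega^p(M,\cF_\xi)$, so $\ii_\xi\a = 0$ and $\ii_\xi d\a = 0$. Again by Cartan's formula, $L_\xi\a = d\ii_\xi\a + \ii_\xi d\a = 0 + 0 = 0$, so $\a \in \Omega^*_\eta(M)$; together with $\ii_\xi\a = 0$ this places $\a$ in $\Omega^p_1(M)$. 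Hence the two subspaces agree in every degree, and since both differentials are the restriction of the de Rham $d$, they agree as differential subalgebras.

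There is essentially no obstacle here: the only point requiring a word of care is the reduction of the basic condition from ``all $X \in \mathfrak{X}(\cF_\xi)$'' to ``just $\xi$'', which uses that $\mathfrak{X}(\cF_\xi)$ is the $C^\infty(M)$-module generated by the single nowhere-vanishing vector field $\xi$ (so $\ii_{f\xi}\a = f\,\ii_\xi\a$ and likewise for $d\a$). Everything else is a one-line application of Cartan's magic formula in each direction. I would present the argument as the two inclusions above, prefaced by the identification of $\mathfrak{X}(\cF_\xi)$ and the resulting simplification of the basic complex.
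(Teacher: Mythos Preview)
Your proof is correct and follows exactly the same approach as the paper: both reduce the basic condition to $\ii_\xi\alpha=0$ and $\ii_\xi d\alpha=0$, and then use Cartan's magic formula to pass between $L_\xi\alpha=0$ and $\ii_\xi d\alpha=0$ under the hypothesis $\ii_\xi\alpha=0$. Your write-up is simply more detailed, in particular spelling out why it suffices to test basicness against the single vector field $\xi$.
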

\begin{proof}
This is clear, since
\[
\a\in \Omega^*_1(M)\Leftrightarrow L_\xi\a=\ii_\xi\a=0\Leftrightarrow\imath_\xi d\a=\ii_\xi\a=0\Leftrightarrow \a\in \Omega^p(M,\cF_\xi).
\]
\end{proof}

\begin{corollary}\label{corollary2}
On a compact co-K\"ahler manifold $M$, $H^*_1(M)\cong H^*(M,\cF_\xi)$ and
\[
H^*(M;\R) \cong H^*_\eta(M)=H^*(M,\cF_\xi)\oplus [\eta]\wedge H^{*-1}(M,\cF_\xi)\,.
\]
\end{corollary}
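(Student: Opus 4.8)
The plan is to assemble Corollary~\ref{corollary2} from the pieces already in place, so the work is essentially bookkeeping rather than new computation. First I would invoke Lemma~\ref{Lemma1}, which identifies $\Omega^*_1(M)$ with the basic de Rham complex $\Omega^*(M,\cF_\xi)$ of the characteristic foliation as \emph{cdga's}, not merely as cochain complexes; passing to cohomology gives $H^*_1(M)\cong H^*(M,\cF_\xi)$ immediately. This isomorphism is induced by an equality of subalgebras of $\Omega^*(M;\bR)$, so there is nothing to check about compatibility with products or with the differential.

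Next I would feed this identification into Corollary~\ref{corollary1}. That corollary already gives the splitting
\[
H^p_\eta(M)=H^p_1(M)\oplus [\eta]\wedge H^{p-1}_1(M),
\]
and substituting $H^*_1(M)=H^*(M,\cF_\xi)$ turns the right-hand side into $H^*(M,\cF_\xi)\oplus[\eta]\wedge H^{*-1}(M,\cF_\xi)$. Finally I would recall Corollary~\ref{quism2}, which asserts that the inclusion $(\Omega^*_\eta(M),d)\hookrightarrow(\Omega^*(M;\bR),d)$ is a quasi-isomorphism, hence $H^*(M;\R)\cong H^*_\eta(M)$. Concatenating these three isomorphisms yields the displayed formula
\[
H^*(M;\R)\cong H^*_\eta(M)=H^*(M,\cF_\xi)\oplus[\eta]\wedge H^{*-1}(M,\cF_\xi),
\]
which is exactly the claim.

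There is really no hard part here: the proof is a one-line chain of previously established isomorphisms, and the only thing worth spelling out is that Lemma~\ref{Lemma1} is an equality of differential graded subalgebras (so the induced map on cohomology is a ring isomorphism respecting the grading shift), which makes the substitution into Corollary~\ref{corollary1} legitimate. If I wanted to add a sentence of care, it would be to note that the class $[\eta]$ makes sense in $H^*_\eta(M)$ because $\eta$ is closed and $L_\xi\eta=0$ (Lemma~\ref{lem:parallel}), so $\eta\in\Omega^1_\eta(M)$, and that wedging with $[\eta]$ is the map appearing in Corollary~\ref{corollary1}; this is already implicit in the preceding discussion. Thus the entire argument is: Lemma~\ref{Lemma1} $\Rightarrow$ $H^*_1\cong H^*(M,\cF_\xi)$; substitute into Corollary~\ref{corollary1}; combine with the quasi-isomorphism of Corollary~\ref{quism2}.
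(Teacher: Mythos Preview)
Your proposal is correct and matches the paper's approach: the corollary is stated there without a separate proof, as it follows immediately from chaining Lemma~\ref{Lemma1}, Corollary~\ref{corollary1}, and Corollary~\ref{quism2} exactly as you describe.
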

\begin{theorem}
Let $(M,J,\xi,\eta,g)$ be a compact co-K\"ahler manifold. Then the Lefschetz map
\begin{align*}
\cL^{n-p}\colon H^p(M;\R) & \cong H^p_\eta(M) \to H^{2n+1-p}_\eta(M)\cong H^{2n+1-p}(M;\R), \\
\a & \mapsto\omega^{n-p+1}\wedge\ii_\xi\a+\omega^{n-p}\wedge\eta\wedge\a
\end{align*}
is an isomorphism for $0\leq p\leq n$.
\end{theorem}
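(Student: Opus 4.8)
The plan is to compute the map induced on $H^*_\eta(M)\cong H^*(M;\R)$ by \eqref{Lefschetz}, using the differential splitting $\Omega^p_\eta(M)=\Omega^p_1(M)\oplus\eta\wedge\Omega^{p-1}_1(M)$ together with its cohomological shadow from \corref{corollary1}, and then to recognise the outcome as a transverse hard Lefschetz isomorphism for the characteristic foliation. First I would record the action of $\cL^{n-p}$ on each summand. If $\a\in\Omega^p_1(M)$, then $\ii_\xi\a=0=\ii_\xi\omega$, so the first term of \eqref{Lefschetz} vanishes and $\cL^{n-p}(\a)=\omega^{n-p}\wedge\eta\wedge\a=\eta\wedge(\omega^{n-p}\wedge\a)$ with $\omega^{n-p}\wedge\a\in\Omega^{2n-p}_1(M)$. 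If $\a=\eta\wedge\gamma$ with $\gamma\in\Omega^{p-1}_1(M)$, then $\eta\wedge\a=0$ and $\ii_\xi\a=\gamma$, so $\cL^{n-p}(\a)=\omega^{n-p+1}\wedge\gamma\in\Omega^{2n+1-p}_1(M)$. Since $\omega$ is closed and basic, these formulas descend to cohomology and show that, relative to $H^p_\eta(M)=H^p_1(M)\oplus[\eta]\wedge H^{p-1}_1(M)$ and $H^{2n+1-p}_\eta(M)=H^{2n+1-p}_1(M)\oplus[\eta]\wedge H^{2n-p}_1(M)$, the Lefschetz map is block-antidiagonal, its two components being cup product with $[\omega]^{n-p}\colon H^p_1(M)\to H^{2n-p}_1(M)$ and with $[\omega]^{\,n-(p-1)}\colon H^{p-1}_1(M)\to H^{2n-(p-1)}_1(M)$.

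Thus the theorem reduces to the transverse hard Lefschetz property of the transversally K\"ahler, real codimension $2n$ foliation $\cF_\xi$: cup product with $[\omega]^{n-k}$ is an isomorphism $H^k(M,\cF_\xi)\to H^{2n-k}(M,\cF_\xi)$ for $0\le k\le n$. Granting this, for $0\le p\le n$ the first component is exactly this isomorphism in transverse degree $p$, and the second is the isomorphism in transverse degree $p-1$ when $1\le p\le n$, while for $p=0$ it is the zero map $0\to 0$ since basic forms of $\cF_\xi$ vanish above degree $2n$ (so $H^{-1}_1(M)=0=H^{2n+1}_1(M)$); hence the block-antidiagonal map is an isomorphism. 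To obtain the transverse hard Lefschetz property within the hereditary framework of the paper, rather than quoting the general theorem for transversally K\"ahler Riemannian foliations, I would use \thmref{thm:maptor} — together with \thmref{thm:cosympsplit}, which allows us to take $\psi$ of finite order — to identify, by restriction to a fibre $K$, the basic complex $\Omega^*_1(M)=\Omega^*(M,\cF_\xi)$ of \lemref{Lemma1} with the $\psi^*$-invariant de Rham subcomplex $\Omega^*(K)^{\psi^*}$ of the compact K\"ahler manifold $K$, the basic class $[\omega]$ corresponding to the K\"ahler class of $K$. Averaging over the finite cyclic group $\langle\psi^*\rangle$ then identifies $H^*(\Omega^*(K)^{\psi^*})$ with the invariant subring $H^*(K;\R)^{\psi^*}$, compatibly with cup product by $[\omega]$; and since the Hermitian isometry $\psi$ fixes the K\"ahler form, $\psi^*$ commutes with the Lefschetz operator of $K$ and acts semisimply (indeed with finite order) on $H^*(K;\R)$, so the classical hard Lefschetz isomorphisms of $K$ restrict to isomorphisms on the $\psi^*$-invariant parts — which is precisely the statement needed for $\cF_\xi$.

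The block computation is a short verification from $\ii_\xi\eta=1$ and $\ii_\xi\omega=0$, so the substantive point is the reduction step: checking that the identifications $\Omega^*_1(M)\cong\Omega^*(K)^{\psi^*}$ of cdga's, $H^*(\Omega^*(K)^{\psi^*})\cong H^*(K;\R)^{\psi^*}$ via averaging (where the finite order of $\psi^*$ is used), and ``cup product by $[\omega]$ on basic cohomology $\leftrightarrow$ Lefschetz operator of $K$ on invariants'' all fit together consistently. Once that is in place, semisimplicity of the isometry action makes the restriction of hard Lefschetz to the invariant subspaces automatic, and the proof is complete. One could instead simply invoke the transverse hard Lefschetz theorem for transversally K\"ahler Riemannian foliations and omit the passage to $K$; the argument above is the one that exhibits the Lefschetz property of $M$ as literally inherited from that of $K$.
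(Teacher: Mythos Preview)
Your reduction is correct and matches the paper's: both decompose $H^p_\eta(M)=H^p_1(M)\oplus[\eta]\wedge H^{p-1}_1(M)$, compute $\cL^{n-p}$ on each summand, and reduce the theorem to the hard Lefschetz property of the transversally K\"ahler foliation $\cF_\xi$. (The paper frames the last step as ``injective plus Poincar\'e duality'' rather than ``block-antidiagonal with invertible blocks'', but this is cosmetic.)

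The genuine difference is in how the transverse hard Lefschetz isomorphism itself is supplied. The paper simply invokes the general result of El~Kacimi-Alaoui \cite{EKA} for transversally K\"ahler Riemannian foliations. Your route instead passes through the mapping torus $M\cong K_\psi$ with finite-order monodromy (\thmref{thm:cosympsplit}), identifies the basic complex with $\Omega^*(K)^{\psi^*}$, averages over the finite cyclic group to get $H^*_1(M)\cong H^*(K;\R)^{\psi^*}$, and then restricts the classical hard Lefschetz isomorphism of the compact K\"ahler manifold $K$ to the $\psi^*$-invariants. What you gain is a proof that is literally hereditary in the paper's advertised sense --- the Lefschetz property of $M$ is pulled directly from that of $K$, with no appeal to transverse elliptic theory. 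What the paper's citation of \cite{EKA} buys is brevity and the avoidance of the identification $\Omega^*(M,\cF_\xi)\cong\Omega^*(K)^{\psi^*}$, which (as you rightly flag) is where the work lies: one must pass to the integral structure of \thmref{thm:maptor} so that the Reeb orbits line up with the $S^1$-direction of the mapping torus, and check that the basic class $[\omega]$ restricts to the K\"ahler class of $K$. Once those identifications are in place, your argument is complete and arguably closer to the spirit of the paper than the paper's own proof.
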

\begin{proof}
First note that, by Poincar\'e duality, it is sufficient to show that $\cL^{n-p}$ has
zero kernel. Now, by Corollary \ref{quism2}, on a compact co-K\"ahler manifold we have an isomorphism
$H^p_\eta(M)\cong \cH^p(M)$. In particular, Corollary \ref{corollary2} tells us that the (harmonic)
cohomology of $M$ can be computed as a cylinder on the basic cohomology of the characteristic foliation.
Since the latter is transversally K\"ahler, in view of \cite{EKA}, the map
$H^p(M,\cF_\xi)\to H^{2n-p}(M,\cF_\xi)$ given by multiplication with the K\"ahler form
$\omega^{n-p}$ is an isomorphism for $p\leq n$. Again by Corollary \ref{corollary2}, the corresponding
map $H^p_1(M)\to H^{2n-p}_1(M)$ is also an isomorphism.

Now consider the Lefschetz map $\cL^{n-p}\colon H^p_\eta(M)\to H^{2n+1-p}_\eta(M)$
given by (\ref{Lefschetz}). Decompose any $\a \in H^p_\eta(M)$ as $\a=\a_1+\a_2$ according to
(\ref{decomposition}) so that $\ii_\xi \alpha_1=0$ and $\a_2=\eta\wedge\ii_\xi\a$. We shall show that
the Lefschetz map is non-zero on both $\a_1$ and $\a_2$ with $\cL^{n-p}(\a_1) \in
\eta \wedge H^{2n-p}_1(M)$ and $\cL^{n-p}(\a_2) \in H^{2n+1-p}_1(M)$. Then, because these
sub-algebras are complementary, we will have $\cL^{n-p}(\a) \neq = 0$ for all $\a \neq = 0$.

For $\a_1\in H^p_1(M)\cong H^p(M,\cF_\xi)$, because $\ii_\xi\a_1=0$, the first term in the
Lefschetz map definition applied to $\a_1$ vanishes. Hence, we get that
$\omega^{n-p}\wedge\a_1\neq 0$ in $H^{2n-p}(M,\cF_\xi)$ and, in view of Corollary \ref{corollary2},
this implies that $\omega^{n-p}\wedge\eta\wedge\a_1$ is non-zero in
$\eta \wedge H^{2n-p}_1(M) \subseteq H^{2n+1-p}_\eta(M)$.

Because $\a_2=\eta\wedge\ii_\xi\a$, we see that the second term in the Lefschetz map definition
applied to $\a_2$ vanishes. Now, $\ii_\xi \a_2\in H^{p-1}_1(M)\cong H^{p-1}(M,\cF_\xi)$, so
$\omega^{n-p+1}\wedge\ii_\xi\a_2\neq 0$ in $H^{2n-p+1}(M,\cF_\xi)\cong H^{2n-p+1}_1(M)$.
Therefore, when $p\geq 1$,
\begin{align*}
\cL^{n-p}(\alpha) & =\omega^{n-p+1}\wedge\ii_\xi\a+\omega^{n-p}\wedge \eta \wedge\a \\
& = \omega ^{n-p+1}\wedge\ii_\xi\a_2+ \omega ^{n-p}\wedge \eta \wedge\a_1 \\
& \neq 0,
\end{align*}
so $\cL^{n-p}$ has zero kernel and is thus an isomorphism on cohomology. Furthermore, when $p=0$, we get
\[
\cL^n(1)=\omega^n\wedge \eta\neq 0\,,
\]
since $\omega^n\wedge\eta$ is a volume form by assumption and, hence, cannot be exact.
\end{proof}
\noindent Since $H^p_\eta(M)\cong\cH^p(M)$ (harmonic forms) on a compact co-K\"ahler manifold, we obtain
\begin{corollary}
Let $(M,J,\xi,\eta,g)$ be a compact co-K\"ahler manifold. Then the Lefschetz map
$\cL^{n-p}\colon\cH^p(M)\to \cH^{2n+1-p}(M)$ %, \quad \a\mapsto\omega^{n-p+1}\wedge\ii_\xi\a+\omega^{n-p}\wedge\eta\wedge\a
is an isomorphism for $0\leq p\leq n$.
\end{corollary}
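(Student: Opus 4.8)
The plan is to obtain this corollary by transporting the preceding theorem across the isomorphism $\cH^p(M)\cong H^p_\eta(M)$; the only point requiring any work is to see that this isomorphism is induced simply by passing to cohomology classes, which amounts to checking that harmonic forms already lie in $\Omega^*_\eta(M)$.

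So first I would show $\cH^*(M)\subseteq\Omega^*_\eta(M)$, that is, $L_\xi\alpha=0$ for every harmonic form $\alpha$. Since $\xi$ is Killing by \lemref{lem:parallel}, its flow acts by isometries, so $L_\xi$ commutes with the Hodge star and with $d$, hence with the Laplacian; thus $L_\xi$ preserves $\cH^*(M)$. On the other hand, for a closed form $\alpha$ Cartan's formula gives $L_\xi\alpha=d\,\imath_\xi\alpha$, which is exact, and an exact harmonic form on a compact manifold vanishes. Hence $L_\xi\alpha=0$. It follows that the Hodge--de Rham isomorphism $\cH^p(M)\xrightarrow{\cong}H^p(M;\R)$ factors as $\cH^p(M)\to H^p_\eta(M)\to H^p(M;\R)$, where the first map sends a harmonic form to its class in $H^p_\eta(M)$ and the second is the inclusion-induced isomorphism of \corref{quism2}; therefore $\cH^p(M)\to H^p_\eta(M)$, $\alpha\mapsto[\alpha]_\eta$, is itself an isomorphism, with inverse given by taking the (unique) harmonic representative.

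Next I would feed this into the Lefschetz statement. By the Proposition above, $\cL^{n-p}$ maps $\Omega^p_\eta(M)$ into $\Omega^{2n+1-p}_\eta(M)$ and carries closed forms to closed forms and exact forms to exact forms, so it descends to a well-defined map $\bar\cL^{n-p}\colon H^p_\eta(M)\to H^{2n+1-p}_\eta(M)$ with $\bar\cL^{n-p}[\alpha]_\eta=[\cL^{n-p}\alpha]_\eta$. Conjugating $\bar\cL^{n-p}$ by the isomorphism of the previous paragraph, the corresponding self-map of $\cH^*(M)$ is precisely $\alpha\mapsto\mathcal H(\cL^{n-p}\alpha)$, with $\mathcal H$ the harmonic projection --- which is the Lefschetz map on harmonic forms of \cite{CdLM}. (On a compact co-K\"ahler manifold $\cL^{n-p}$ in fact already preserves harmonicity, so $\mathcal H(\cL^{n-p}\alpha)=\cL^{n-p}\alpha$ and the map is literally the one in the statement; but this observation is not needed.)

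Finally, the theorem just proved says that $\bar\cL^{n-p}$ is an isomorphism for $0\le p\le n$, and an isomorphism remains one under conjugation, so the Lefschetz map on $\cH^*(M)$ is an isomorphism as well. I do not expect any serious obstacle: the whole content is the inclusion $\cH^*(M)\subseteq\Omega^*_\eta(M)$, after which the corollary is a formal consequence of \corref{quism2}, the Proposition, and the preceding theorem; the only thing to be slightly careful about is matching the harmonic-forms Lefschetz map of \cite{CdLM} with the descended map $\bar\cL^{n-p}$, which is exactly what the identification $\alpha\leftrightarrow[\alpha]_\eta$ accomplishes.
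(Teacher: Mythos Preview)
Your proposal is correct and follows essentially the same route as the paper: the paper derives the corollary in one line from the preceding theorem via the identification $H^p_\eta(M)\cong\cH^p(M)$ (which in turn comes from \corref{quism2} together with Hodge theory). You simply flesh out what the paper leaves implicit, namely that harmonic forms lie in $\Omega^*_\eta(M)$ so that the identification $\alpha\leftrightarrow[\alpha]_\eta$ intertwines the two Lefschetz maps; this extra care is welcome but does not constitute a different argument.
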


In \cite{CW} the authors prove that the minimal model $\cM_{M,\cF}$ of the basic forms
$\Omega^*(M,\cF)$ of a transversally
K\"ahler foliation $\cF$ on a compact manifold is formal. We would like to use our characterization
(in a slightly different form) of the cohomology of a compact co-K\"ahler manifold to give an
alternative proof of this formality in the context of co-K\"ahler geometry as well as a new description of the minimal model of a co-K\"ahler
manifold. Note that \corref{corollary2} may be phrased as the following.

\begin{corollary}\label{cor:tensorcor}
Let $M$ be a compact co-K\"ahler manifold; then $H^*_1(M)\cong H^*(M,\cF_\xi)$ and
\[
H^*(M;\R) \cong H^*_\eta(M)=H^*(M,\cF_\xi)\otimes \land([\eta])\,.
\]
Furthermore, the splitting $\Omega^p_\eta(M)=\Omega^p_1(M)\oplus \eta \wedge\Omega^{p-1}_1(M)$
(for each $p$) may be written as
\[
\Omega^*_\eta(M)=\Omega^*_1(M)\otimes \land(\eta)\,. 
\]
\end{corollary}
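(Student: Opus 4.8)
The plan is to observe that \corref{cor:tensorcor} is a reformulation, at the level of cdga's, of the differential splitting $\Omega^p_\eta(M)=\Omega^p_1(M)\oplus\eta\wedge\Omega^{p-1}_1(M)$ and its cohomological consequence \corref{corollary1}; so the work consists only in checking that the evident vector-space isomorphisms respect products and differentials. First I would recall that, by \lemref{lem:parallel}, $\eta$ is closed, and that $L_\xi\eta=\imath_\xi d\eta+d\imath_\xi\eta=0+d(1)=0$, so $\eta\in\Omega^1_\eta(M)$; moreover $\eta\wedge\eta=0$ because $\eta$ has odd degree. Hence the sub-cdga of $\Omega^*_\eta(M)$ generated by $\eta$ is a copy of the exterior algebra $\land(\eta)$ equipped with the zero differential. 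I would then introduce
\[
\Phi\colon \Omega^*_1(M)\otimes\land(\eta)\longrightarrow\Omega^*_\eta(M),\qquad \alpha\otimes 1\mapsto\alpha,\quad \alpha\otimes\eta\mapsto\alpha\wedge\eta,
\]
where the source carries the usual graded tensor product multiplication and the differential $d\otimes 1+1\otimes d=d\otimes 1$, the second summand vanishing because $d\eta=0$. (Here I use that $\Omega^*_1(M)$ is a differential subalgebra and that $\Omega^*_\eta(M)$, being $\ker L_\xi$ for the derivation $L_\xi$, is a subalgebra, so all the indicated products land where claimed.)

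Next I would verify that $\Phi$ is a morphism of cdga's. Multiplicativity reduces to the three cases $(\alpha\otimes 1)(\beta\otimes 1)$, $(\alpha\otimes 1)(\beta\otimes\eta)$ and $(\alpha\otimes\eta)(\beta\otimes\eta)$: the first two follow from associativity and graded-commutativity of $\wedge$ together with the sign in the graded tensor product (which is why one must place $\eta$ on the right, writing $\alpha\wedge\eta$ rather than $\eta\wedge\alpha$), and the third gives $0$ on both sides since $\eta\wedge\eta=0$. Compatibility with differentials is the computation $\Phi\big(d(\alpha\otimes\eta)\big)=\Phi(d\alpha\otimes\eta)=d\alpha\wedge\eta=d(\alpha\wedge\eta)$, again using $d\eta=0$, and is trivial on $\alpha\otimes 1$. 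That $\Phi$ is bijective is exactly the differential splitting displayed just before \corref{corollary1}: injectivity because $\alpha_1+\alpha_2\wedge\eta=0$ forces $\alpha_1=\alpha_2=0$ (apply $\imath_\xi$ and use $\imath_\xi\alpha_i=0$), and surjectivity because any $\nu\in\Omega^p_\eta(M)$ decomposes as $\nu=(\nu-\eta\wedge\imath_\xi\nu)+\eta\wedge\imath_\xi\nu$ with first summand in $\Omega^p_1(M)$ and $\imath_\xi\nu\in\Omega^{p-1}_1(M)$. This establishes $\Omega^*_\eta(M)=\Omega^*_1(M)\otimes\land(\eta)$ as cdga's.

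Passing to cohomology, since $\land(\eta)$ has zero differential and is two-dimensional over $\R$, the complex $\Omega^*_1(M)\otimes\land(\eta)$ splits as $\Omega^*_1(M)\oplus\Omega^*_1(M)\otimes\eta$ with the latter a shifted copy of $\Omega^*_1(M)$, so (equivalently, by Künneth over the field $\R$) $H^*_\eta(M)\cong H^*_1(M)\otimes\land([\eta])$. Then \lemref{Lemma1} identifies $H^*_1(M)$ with the basic cohomology $H^*(M,\cF_\xi)$, and \corref{quism2} supplies $H^*(M;\R)\cong H^*_\eta(M)$, giving $H^*(M;\R)\cong H^*(M,\cF_\xi)\otimes\land([\eta])$. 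There is no genuine obstacle: the proof is pure bookkeeping, and the only points needing a moment's attention are the placement of $\eta$ (hence the sign) in the definition of $\Phi$ and the vanishing $d\eta=0$, which is precisely where the co-K\"ahler hypothesis enters, via \lemref{lem:parallel}.
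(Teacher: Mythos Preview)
Your proof is correct and follows precisely the route the paper intends: the paper offers no separate argument for this corollary, merely introducing it with ``Note that \corref{corollary2} may be phrased as the following,'' so it is treated as an immediate restatement of the already-established differential splitting $\Omega^p_\eta(M)=\Omega^p_1(M)\oplus\eta\wedge\Omega^{p-1}_1(M)$ together with \corref{corollary1}, \lemref{Lemma1}, and \corref{quism2}. Your explicit construction of $\Phi$ and verification that it is a cdga isomorphism simply spells out what the paper leaves implicit; the one cosmetic difference is that you write $\alpha\wedge\eta$ (with $\eta$ on the right) to make the graded-tensor signs transparent, whereas the paper writes $\eta\wedge\Omega^{p-1}_1(M)$, but this is harmless.
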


\noindent Using this description, we can now see the transversally K\"ahler structure
reflected in the minimal model of $M$. We

\begin{proposition}
Let $M$ be a compact co-K\"ahler manifold. Then $\cM_{M,\cF}$ is formal in the sense of Sullivan
and the minimal model of $M$ splits as a tensor product of cdga's
\[
\cM_M \cong \cM_{M,\cF} \otimes \land(\eta,d=0).
\]
\end{proposition}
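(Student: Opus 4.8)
The plan is to build the minimal model of $M$ directly from the cdga $\Omega^*_\eta(M)$, using the tensor decomposition of \corref{cor:tensorcor}. First I would invoke \corref{quism2} to replace $\Omega^*(M;\bR)$ by the quasi-isomorphic cdga $\Omega^*_\eta(M)$, so that $\cM_M$ is also a minimal model for $\Omega^*_\eta(M)$. By \corref{cor:tensorcor} we have $\Omega^*_\eta(M)=\Omega^*_1(M)\otimes\land(\eta)$ as cdga's, with $d\eta=0$, and by \lemref{Lemma1} the factor $\Omega^*_1(M)$ is exactly the basic forms $\Omega^*(M,\cF_\xi)$ of the characteristic foliation. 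Now $\eta$ is a closed $1$-form representing a nonzero cohomology class (it is harmonic and, together with $\omega^n$, gives a volume form), and $(\land(\eta),d=0)$ is already its own minimal model. Since the minimal model of a tensor product of cdga's is the tensor product of the minimal models — here one factor being already minimal — I would conclude $\cM_M\cong\cM_{M,\cF}\otimes\land(\eta,d=0)$, where $\cM_{M,\cF}$ denotes a minimal model of $\Omega^*(M,\cF_\xi)$.

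The remaining assertion is that $\cM_{M,\cF}$ is formal. The approach I would take is to transport the Lefschetz-type structure established above to the basic cohomology and run the classical Deligne–Griffiths–Morgan–Sullivan argument intrinsically on the transversally K\"ahler foliation. Concretely, $\Omega^*(M,\cF_\xi)$ carries the transverse K\"ahler form $\omega$ and, by the work of El Kacimi-Alaoui cited in the excerpt, satisfies the hard Lefschetz property: $\omega^{n-p}\colon H^p(M,\cF_\xi)\to H^{2n-p}(M,\cF_\xi)$ is an isomorphism for $p\le n$. One also has a transverse Hodge theory on basic forms, yielding a basic Hodge decomposition and, crucially, a $\partial\bar\partial$-type lemma for basic forms on a transversally K\"ahler foliation. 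With these two ingredients in hand the formality proof is formal: one shows that the inclusion of $d$-closed basic forms into $\Omega^*(M,\cF_\xi)$ and the projection of $d$-closed basic forms onto their basic harmonic representatives are both quasi-isomorphisms of cdga's (the latter using the basic $dd^c$-lemma to kill the ambiguity), exhibiting $\Omega^*(M,\cF_\xi)$ as quasi-isomorphic to its cohomology with zero differential, hence formal; formality passes to the minimal model $\cM_{M,\cF}$.

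The main obstacle is justifying the foliated Hodge theory and the basic $dd^c$-lemma on which the formality argument rests. Unlike the compact manifold case, the complex $\Omega^*(M,\cF_\xi)$ is not the global sections of an elliptic complex in an obvious way, and finiteness of basic cohomology together with the existence of basic harmonic projectors requires the foliation to be Riemannian and taut — both of which hold here because $\xi$ is Killing and parallel (\lemref{lem:parallel}), so the characteristic foliation is isometric. Given that, the transverse Hodge and K\"ahler identities go through on the transverse level, and the basic $\partial\bar\partial$-lemma follows exactly as in the K\"ahler case. In practice I would simply cite El Kacimi-Alaoui for the transverse Hodge theory and hard Lefschetz, and then give the short DGMS-style deduction of formality, remarking that it recovers the result of \cite{CW} in this geometric setting while simultaneously producing the product decomposition of $\cM_M$ above.
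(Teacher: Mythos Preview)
Your derivation of the splitting $\cM_M\cong\cM_{M,\cF}\otimes\land(\eta,d=0)$ via \corref{quism2}, \corref{cor:tensorcor} and \lemref{Lemma1} matches the paper's route exactly. Where you diverge is in proving formality of $\cM_{M,\cF}$: you propose to run the DGMS argument directly on basic forms, invoking transverse Hodge theory and a basic $dd^c$-lemma for the transversally K\"ahler characteristic foliation. That is correct and is essentially a reconstruction of the Cordero--Wolak proof \cite{CW}. The paper deliberately avoids this: it instead takes the formality of $M$ itself as a known input (from \cite{CdLM,BLO}) and then uses the elementary fact that in a cdga decomposition $A\cong B\otimes C$, $A$ is formal if and only if both $B$ and $C$ are. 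Since $\land(\eta,d=0)$ is trivially formal, formality of $\cM_{M,\cF}$ drops out in one line. Your approach is self-contained in that it does not presuppose formality of $M$, but it costs you the full transverse K\"ahler package; the paper's approach is much shorter and illustrates the hereditary principle in reverse---formality flows \emph{from} the co-K\"ahler manifold \emph{to} the transverse K\"ahler structure---which is precisely the point the authors want to make (see the Remark immediately following the proof).
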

\begin{proof}
We use two facts: first, by \cite{CdLM,BLO}, we know that $M$ is formal; secondly, we know that, in a cdga decomposition
$A\cong B \otimes C$, $A$ is formal if and only if both $B$ and $C$ are formal. 
\end{proof}

\begin{remark} The proof above is much simpler than the original in \cite{CW}, but is only for transversally K\"ahler
foliations arising from co-K\"ahler structures. Of course, if we, on the other hand, assume 
the formality of $\cM_{M,\cF}$ (by \cite{CW}), then $\land(\eta,d=0)$ and the identification
$\Omega^*(M,\cF) \cong \Omega^*_1(M)$ allow us to obtain the following diagram.
\[\xymatrix{
H^*_\eta(M) \ar[r]^-\cong & H^*(M,\cF_\xi)\otimes \land([\eta]) \\
\cM_M \ar@{-->}[u]^-\theta\ar@{-->}[r]^-\rho \ar[d]^\simeq & \cM_{M,\cF} \otimes \land(\eta,d=0)
\ar[d]_\simeq \ar[u]_\simeq \\
\Omega^*_\eta(M) \ar@{=}[r] & \Omega^*(M,\cF)\otimes \land(\eta)
}
\]
Here, the quasi-isomorphism $\rho$ is obtained from a standard lifting lemma in minimal model theory applied to the bottom part of the
diagram. By composition, we then obtain $\theta$ and we see it is a quasi-isomorphism. Hence,
$M$ is formal and, again by the lifting lemma, the quasi-isomorphism $\rho$ is an
isomorphism.
\end{remark}

\end{document}